%% file: main.tex
\documentclass{article}
\newcommand*{\inserttitle}{Small lattice polytopes have few vertices}
\usepackage{marvosym}

\input{header}

\begin{document}
\thispagestyle{plain}

\null
\vspace{0.5\baselineskip}
\begin{center}
    {\LARGE
    \scshape\bfseries
    small lattice polytopes have few vertices}\\
    \Large A proof of Andrew's theorem\\[0.5em]
    Travis Dillon
\end{center}
\vspace{1\baselineskip}

The purpose of this paper is to prove that a lattice polytope (that is, a polytope whose vertices are lattice points) with many vertices must also have large volume (or, the other way around, a lattice polytope with small volume does not have many vertices).

\begin{theorem}\label{thm:volume-vertex}
    For any lattice polytope $P$ in $\R^d$,
    \[
        |\operatorname{vert}(P)| \leq c_d \Vol(P)^{\frac{d-1}{d+1}}.
    \]
    The constant $c_d$ depends only on the dimension $d$.
\end{theorem}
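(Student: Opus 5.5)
Throughout I assume $P$ is full-dimensional; for lower-dimensional $P$ the $d$-dimensional volume vanishes and the statement has to be read inside the affine hull. The plan is a cap-covering argument in the style of Bárány–Larman, using economic cap coverings of the floating body. The constants are then fixed by the single fact that a full-dimensional lattice polytope in $\R^d$ has volume at least $1/d!$.

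\textbf{Step 1 (cap covering).} Write $V=\Vol(P)$ and fix $\varepsilon>0$ with $\varepsilon V = \delta_d$, where $\delta_d$ is a small dimensional constant chosen later. Consider the floating body $P_\varepsilon$, the set of points of $P$ lying only in caps of volume at least $\varepsilon V$.
\begin{itemize}
\item Every vertex $v$ of $P$ lies outside $P_\varepsilon$. Indeed, cutting $P$ by a hyperplane close to a supporting hyperplane that touches $P$ only at $v$ gives caps containing $v$ of arbitrarily small volume.
\item By the economic cap covering theorem, applied after an affine normalization of $P$ (the bound is affine invariant), $P\setminus P_\varepsilon$ is covered by
\[
N \le C_d\, \varepsilon^{-\frac{d-1}{d+1}} = C_d\,(V/\delta_d)^{\frac{d-1}{d+1}}
\]
caps $C_1,\dots,C_N$, each of volume at most $c'_d\,\varepsilon V = c'_d\delta_d$.
\end{itemize}
So $\operatorname{vert}(P)$ is split among $N$ caps. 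This is exactly the desired exponent, so it remains to show that each cap contains $O_d(1)$ vertices.

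\textbf{Step 2 (few vertices per cap).} Let $W_i=\operatorname{vert}(P)\cap C_i$ and $Q_i=\operatorname{conv}(W_i)$. Then $Q_i$ is a lattice polytope contained in $C_i$, so $\Vol(Q_i)\le c'_d\delta_d$.
\begin{itemize}
\item Choose $\delta_d$ so that $c'_d\delta_d<1/d!$. Then $Q_i$ cannot be full-dimensional, so $W_i$ lies in an affine hyperplane $H_i$.
\item Moreover, every point of $W_i$ is a vertex of $P$, hence a vertex of $Q_i$. So $W_i$ is the vertex set of a lattice polytope in $H_i\cap\Z^d$, which is a lattice of rank $d-1$.
\end{itemize}
I now want to apply the theorem in dimension $d-1$, by induction on $d$; the case $d=1$ is trivial, with at most $2$ vertices. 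For this I need a bound on the $(d-1)$-volume of $Q_i$, measured with respect to the lattice $H_i\cap\Z^d$.

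\textbf{Main obstacle.} A cap of tiny volume can be long and flat, so $Q_i$ may have large relative volume. The planned fix is to refine the covering.
\begin{itemize}
\item Replace caps by Macbeath regions, or equivalently use caps whose ``width'' in the cap direction is comparable to their volume divided by their base area.
\item Then exploit that $H_i$ cuts $C_i$ in a section whose relative volume is controlled by $\Vol(C_i)$ divided by the lattice width of $C_i$ in the direction normal to $H_i$.
\item Since $H_i$ is a lattice hyperplane, its lattice determinant is at least $1$, which gives the normalization needed in the lower-dimensional bound.
\end{itemize}
If this control is unavailable, the fallback is to iterate Step 2. Subdivide each cap further into $O_d(1)$ sub-caps of volume below $1/d!$ relative to each successive flat, using John's theorem to make $C_i$ roughly a box, and descend through the dimensions. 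This yields $|W_i|\le K_d$, and hence
\[
|\operatorname{vert}(P)|\le K_d N \le c_d V^{\frac{d-1}{d+1}}.
\]
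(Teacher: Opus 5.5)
There is a genuine gap in Step 2. Step 1 is a legitimate (if heavy) black box, and your observation that the vertices in a cap of volume below $1/d!$ lie in one lattice hyperplane is correct. But the claim that each covering cap carries only $O_d(1)$ vertices of $P$ is false. Both your ``planned fix'' and your fallback aim at this claim. The failure also has nothing to do with which covering you pick.

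Take $d=3$. Let $F$ be a centrally symmetric lattice polygon with $2n$ vertices in the plane $z=0$, centred at the origin (e.g.\ $\conv(\Z^2\cap RB^2)$), and let $P=\conv(F\cup\{(0,0,-1)\})$, so $V=\mathrm{Area}(F)/3$.

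\begin{itemize}
\item If $\eta\,\mathrm{Area}(F)<\delta_3$, the cap $P\cap\{z\ge-\eta\}$ shows that $x=(0,0,-\eta)$ is in the wet part. So some $C_i$ contains $x$.
\item A cap containing $x$ that is cut off by a vertical plane, or by a plane lying above $x$, has volume at least $V/16$. In the first case it contains half of every slice $(1+z)F$; in the second, half of every slice with $z\le-1/2$. This exceeds $c_3'\delta_3$ once $n$ is large.
\item Hence $C_i=P\cap\{(u,z):z\ge g(u)\}$ for an affine $g$ on $\R^2$ with $g(0)\le-\eta<0$.
\item Every vertex $(u,0)$ of $F$ with $g(u)\le 0$ lies in $C_i$. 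Since $g(u)+g(-u)=2g(0)<0$, at least one vertex of each antipodal pair does.
\end{itemize}
So $|W_i|\ge n$, which is unbounded. Splitting a cap into $O_d(1)$ pieces cannot repair this.

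What is missing is an amortized estimate. The $(d-1)$-dimensional theorem gives $|W_i|\lesssim\rho_i^{(d-2)/d}$, where $\rho_i$ is the volume of $Q_i$ measured in the lattice $H_i\cap\Z^d$. The example forces $\rho_i\gtrsim n^3$ for some $i$, so there is no uniform per-cap bound to multiply by $N$. You would need $\sum_i\rho_i^{(d-2)/d}\lesssim V^{(d-1)/(d+1)}$, i.e.\ that caps with large lattice sections are rare. Nothing in your sketch controls such a sum: neither $\rho_i\lesssim\Vol(C_i)/\text{width}$ nor $\det(H_i\cap\Z^d)\ge 1$ does.

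This is exactly the step the paper's proof is built around, with facets in place of caps:
\begin{itemize}
\item The lattice determinant of the $i$th facet hyperplane is the length of an integer normal $h_i$.
\item These are distinct vectors of a unimodular lattice (after the volume-preserving map $A$, via $A^{-\top}h_i$). So by the lattice-point volume lemma the $k$th smallest has length $\gtrsim(k-d)^{1/d}$.
\item H\"older's inequality and the reverse isoperimetric inequality then turn the sum of the inductive bounds over facets into $\Vol(P)^{(d-1)/(d+1)}$.
\end{itemize}
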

\noindent
In comparison, $|\Z^d \cap P| \leq c_d' \Vol(P)$ is the best possible inequality comparing the volume of a lattice polytope to the number of lattice points it \emph{contains} (see \cref{thm:volume-lattice_point}).

As Imre B\'ar\'any's notes in his paper \textit{Random points and lattice points in convex bodies} \cite{barany-random_lattice}, the original proof of G.\,E. Andrews in 1963 is ``not easy'' and while ``there are several other proofs available,\textdots none of them is simple\slimdot''\footnote{See the citations after Theorem 13.1 in \cite{barany-random_lattice} for references to other proofs.} That's a shame, because this is a very nice theorem: It's tight up to a constant (see \cref{thm:volume-vertex-tight}), and it shows a fundamental difference between the number of lattice points \emph{contained in} a lattice polytope and the number of \emph{vertices} of a lattice polytope.

This note is a retelling of Konyagin and Sevast'yanov's proof from \cite{KS-volume_vertex}. My intention is to make their strategy more evident and foreground the underlying geometry. Reading their proof is like being grabbed firmly by the arms and marched steadily and unyieldingly through a formidable concrete hallway until you stumble over the conclusion you were trying to reach. It's there, sure enough, but you have to wonder why you took such an odd journey to get there, and why in the middle of the march you were told to scratch your toes, and, now that you think about it, why you walked backwards rather than forwards. That said, if you dismiss the guide, re-plot the journey, and post a few guiding signs, it's an appealing and satisfying journey.

Let's get to it.

\section{The strategy}

In broadest terms, the strategy is simple: Induction on the dimension.

The case $d=1$ is easy, since any polytope in $\R^1$ has at most two vertices. Now we move up.

Suppose $P\subseteq \R^d$ has $m$ vertices and $k$ facets, which we'll call $F_1,\dots,F_k$; suppose that $F_i$ has $m_i$ vertices. The hyperplane containing $F_i$ intersects $\Z^d$ in an affine sublattice of dimension $d-1$; let $\Pi_i$ be its fundamental parallelotope. \cref{thm:volume-vertex} applied to $F_i$ in this sublattice tells us that
\begin{equation}\label{eq:induction-ineq}
    m_i \leq c_{d-1} \Big(\frac{\Vol_{d-1}(F_i)}{\Vol_{d-1}(\Pi_i)}\Big)^{\frac{d-2}{d}}.\tag{$\ast$}
\end{equation}

To use induction, we bound $m$ by $\sum_{i=1}^k m_i$ and then use the previous inequality to bound each term in this sum by some function of the $(d-1)$-dimensional volume of the facets. But then we can't make the next step: it's impossible to upper bound the surface measure by \emph{any} function of the volume. As an example, take the usual unit hypercube $\conv(\{0,1\}^d)$ and shear the top facet far away from the bottom facet. (For example, so that the vertices are $\{0,1\}^{d-1}\times \{0\}$ and $\{a,a+1\} \times \{0,1\}^{d-2}\times \{1\}$ for some $a \in \Z$.) The volume of this polytope is always 1, but the surface measure tends toward infinity.

Luckily, there is a fix, called the \emph{reverse isoperimetric inequality}: (see \cref{sec:reverse-isoperimetric} for a proof\,\footnote{To focus on the main ideas of the proof, proofs of various lemmas and tangential statements have been moved to \cref{sec:other-proofs}.})
\begin{lemma}
    For any convex body $C$ in $\R^d$, there is a volume-preserving linear transformation $A$ so that
    \[
        \Vol_{d-1}\!\big(\partial A(C)\big) \leq c_d \Vol\!\big(A(C)\big)^{\frac{d-1}{d}}.
    \]
\end{lemma}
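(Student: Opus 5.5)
The plan is to put $C$ into a position where it is comparable to a ball, and then compare surface area and volume through the ball.

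\textbf{Step 1: normalize.} By John's theorem there is an ellipsoid $E$, with center $c$, such that
\[
E \subseteq C \subseteq c + d(E - c).
\]
(I will only use that some fixed dimensional factor $\lambda_d$ works, so the sharper symmetric factor is not needed.) Pick a linear map $B$ that sends $E$ to a Euclidean ball of radius $r$. Then rescale $B$ so that $\det B = 1$. A scalar multiple of a map that sends ellipsoids to balls still sends ellipsoids to balls, so after translating,
\[
rB^d \subseteq A(C) \subseteq \lambda_d r B^d, \qquad A = B .
\]
Translations change neither surface area nor volume, so we may ignore them.

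\textbf{Step 2: bound the surface area from above.} For convex bodies, surface area is monotone under inclusion. One way to see this is Cauchy's formula: the surface area equals a constant times the average over directions of the $(d-1)$-volume of the projection, and projections are monotone. Alternatively, the nearest-point map onto the inner body is $1$-Lipschitz and surjective onto its boundary. Either way,
\[
\Vol_{d-1}\big(\partial A(C)\big) \le \Vol_{d-1}\big(\partial(\lambda_d r B^d)\big) = d\,\omega_d\,\lambda_d^{d-1} r^{d-1}.
\]

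\textbf{Step 3: bound the volume from below.} From $rB^d \subseteq A(C)$ we get
\[
\Vol\big(A(C)\big) \ge \omega_d r^d,
\]
so $r^{d-1} \le \big(\Vol(A(C))/\omega_d\big)^{(d-1)/d}$. Combining this with Step 2 gives the lemma with
\[
c_d = d\,\omega_d^{1/d}\lambda_d^{d-1}.
\]

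The main obstacle is justifying the monotonicity of surface area under inclusion for convex bodies. I expect to prove it with Cauchy's projection formula, or more elementarily by inductively cutting the outer body with the supporting hyperplanes of the facets of an approximating polytope. I also need to invoke John's theorem, which I would cite rather than reprove. If citing it is undesirable, any weaker sandwich with a dimensional factor suffices. One such sandwich comes from a maximal-volume simplex inside $C$: it gives $C \subseteq -d\,\Delta + $ (centroid adjustments). Mapping that simplex to a regular one then yields the inscribed and circumscribed balls directly.
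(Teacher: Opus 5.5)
Your proposal is correct and follows essentially the same route as the paper: use John's theorem to put $A(C)$ in a position where it lies between a ball and its $d$-fold dilate, then combine monotonicity of surface area under inclusion with the volume of the inscribed ball. Your bookkeeping with the inner radius $r$ is in fact slightly more careful than the paper's version. The paper normalizes $\Vol(A(C))=1$ and then asserts $A(C)\subseteq d\cdot B^d$, which tacitly assumes the John ball is centered at the origin with radius at most $1$; you also justify the monotonicity of surface area, which the paper uses without comment.
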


Our new strategy is to use the bound
\[
    m_i \leq c_{d-1} \Big(\frac{\Vol_{d-1}\!\big(A(F_i)\big)}{\Vol_{d-1}\!\big(A(\Pi_i)\big)}\Big)^{\frac{d-2}{d}},
\]
which follows from \eqref{eq:induction-ineq} simply because the \emph{ratio} of two volumes is invariant under linear transformation (even though the actual quantity $\Vol_{d-1}\!\big(A(F_i)\big)$ is likely different). The outline of the proof is then:
\begin{enumerate}
    \item Calculate a lower bound on $\Vol_{d-1}\!\big(A(\Pi_i)\big)$ to obtain an upper bound for $m_i$ in terms of $\Vol_{d-1}\!\big(A(F_i)\big)$.
    \item Use the reverse isoperimetric inequality to convert this into a bound on volume.
\end{enumerate}
That's it! Now it's time to follow through.

\section{The calculations}

\subsection{Volume bound on fundamental parallelotopes}

Instead of estimating $\Vol_{d-1}\!\big(A(\Pi_i)\big)$, let's start with the easier task of estimating $\Vol_{d-1}(\Pi_i)$. The strategy is the same for both cases, but the latter is less ornate and therefore easier to understand.

To do find a lower bound for $\Vol_{d-1}(\Pi_i)$, we'll introduce the vector $h_i$, which is a normal vector to $\Pi_i$ of length $|h_i| = \Vol_{d-1}(\Pi_i)$. The excellently convenient fact about this vector is that it has integer coordinates. There are two ways to see this, one by matrix manipulation and the other geometrically; both can be found in \cref{thm:normal-vector}. Either way, now comes the clever bit.

Order the indices so that $|h_1|\leq \cdots \leq |h_k|$ (in other words, so that the volumes of $\Pi_i$ form a nondecreasing sequence). For any $\ell$, we have
\[
    \Vol\!\big(\!\conv(0,h_1,\dots,h_\ell)\big) \leq \Vol(B^d) |h_\ell|^d,
\]
since all the vectors $h_1,\dots,h_\ell$ are contained inside the ball of radius $|h_\ell|$. ($B^d$ is the unit ball.) But we can also get a lower bound for the volume based only on the fact that it contains at least $\ell$ integer points: the points $h_1,\dots,h_\ell$ themselves.

\begin{lemma}\label{thm:volume-lattice_point}
    If $X\subseteq \Z^d$ does not lie in a single hyperplane, then
    \[
        \Vol\!\big(\!\conv(X)\big) \geq \frac{|X|-d}{d!}.
    \]    
\end{lemma}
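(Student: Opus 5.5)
We prove the lemma by triangulating $\conv(X)$ using all the points of $X$ and counting simplices.
Since $X$ does not lie in a hyperplane, $P=\conv(X)$ is full-dimensional. We will show that $P$ admits a triangulation into at least $|X|-d$ full-dimensional simplices, each with vertices in $X\subseteq\Z^d$. Every such lattice simplex has volume $|\det(v_1-v_0,\dots,v_d-v_0)|/d!$. The determinant is a nonzero integer, so each simplex has volume at least $1/d!$. Summing over the triangulation then gives $\Vol(P)\ge (|X|-d)/d!$.

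To build the triangulation, we add points one at a time, placing them. Since $X$ is not contained in a hyperplane, we can pick $d+1$ affinely independent points $x_0,\dots,x_d\in X$. Let $T_0=\conv(x_0,\dots,x_d)$; this is one simplex. List the remaining points of $X$ as $y_1,\dots,y_{n}$ with $n=|X|-d-1$. Suppose we already have a triangulation $\mathcal T_{j-1}$ of $Q_{j-1}=\conv(x_0,\dots,x_d,y_1,\dots,y_{j-1})$ by simplices with vertices among these points. We add $y_j$ as follows.
\begin{itemize}
\item If $y_j\notin Q_{j-1}$, some facet of $Q_{j-1}$ is visible from $y_j$. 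That facet is triangulated by boundary faces of $\mathcal T_{j-1}$, and coning $y_j$ over one of these $(d-1)$-simplices produces at least one new full-dimensional simplex. Coning over all visible boundary faces gives a triangulation of $Q_j$.
\item If $y_j\in Q_{j-1}$, it lies in the relative interior of some face $\sigma$ of a simplex of $\mathcal T_{j-1}$. We stellarly subdivide every simplex containing $\sigma$ by coning $y_j$ over its faces not containing $\sigma$. A full-dimensional simplex containing $y_j$ is split into at least two pieces, since $y_j$ is not one of its vertices. So the count increases by at least one.
\end{itemize}
In both cases the number of $d$-simplices grows by at least one per added point. After all points are added we have at least $1+n=|X|-d$ simplices triangulating $Q_n=P$, with pairwise disjoint interiors.

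The only delicate step is making sure the result really is a triangulation, with interiors disjoint and union equal to $P$. For the stellar subdivision and visible-facet coning this is standard. Alternatively, we can skip triangulations and argue by volume directly. Then only two facts are needed. First, a lattice point $y$ outside a convex polytope $Q$ adds a new region $\conv(Q\cup\{y\})\setminus Q$, which contains a lattice simplex of volume at least $1/d!$ (the cone from $y$ over a lattice simplex in a visible facet). Second, a point inside merely refines the count. The incremental triangulation handles both facts cleanly, so we take it as the main line of the proof.
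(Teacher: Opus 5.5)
Your proof is correct and follows the paper's route: decompose $\conv(X)$ into at least $|X|-d$ interior-disjoint lattice simplices by induction on the number of points, then use that each such simplex has volume at least $1/d!$. The difference is mechanical: the paper repeatedly removes a vertex $v$ of the current hull and finds a lattice simplex in $\conv(X)\setminus\conv(X\setminus v)$, which amounts to inserting points in an order where each new point lies outside the current hull, so it never needs your stellar-subdivision case (and this ordering is exactly what your ``direct volume'' alternative would need, since an interior point adds no volume).
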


For a proof, see \cref{sec:volume-lattice_point}. At this point, it would be nice to say that
\[
    \frac{\ell-d}{d!} \leq \Vol(B^d)|h_\ell|^d,
\]
and since $d!$ and $\Vol(B^d)$ are both constants in $d$, we have that $\Vol_{d-1}(\Pi_\ell) = |h_\ell| \gtrsim (\ell-d)^{1/d}$ for every $\ell$. (The symbol $\gtrsim$ means that the inequality is true up to a constant that depends only on the dimension.) But this is only true if $h_1,\dots,h_\ell$ span $\R^d$. \emph{If} this condition holds, combining this lower bound for $\Vol_{d-1}(\Pi_\ell)$ with \eqref{eq:induction-ineq} tells us that
\[
    m_\ell^{d/(d-2)} (\ell-d)^{1/d} \lesssim \Vol_{d-1}(F_i).
\]

Even though this inequality is only true for some $\ell$, we're still able to find an upper bound for $m$:
\begin{lemma}\label{thm:restricted-sum}
    If $t$ is the largest index such that $h_1,\dots,h_t$ is contained in a proper subspace of $\R^d$, then $m \leq \sum_{i=t+1}^k m_i$. 
\end{lemma}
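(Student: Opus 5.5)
The plan is to show that every vertex of $P$ lies in at least one of the facets $F_{t+1},\dots,F_k$. Then
\[
  m = |\operatorname{vert}(P)| \le \sum_{i=t+1}^k |\operatorname{vert}(F_i)| = \sum_{i=t+1}^k m_i .
\]
So I fix a vertex $v$ of $P$ and suppose, for contradiction, that the only facets containing $v$ are among $F_1,\dots,F_t$.

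First I use that $h_1,\dots,h_t$ lie in a proper subspace $H$ of $\R^d$. Then there is a nonzero vector $u$ with $\langle u,h_i\rangle = 0$ for every $i\le t$. Each $h_i$ is normal to $F_i$; choose its sign so that it points outward. The facets through $v$ cut out the tangent cone of $P$ at $v$:
\[
  T_v = \{x : \langle h_i, x - v\rangle \le 0 \text{ for all } i \text{ with } v\in F_i\}.
\]
Near $v$, the polytope $P$ coincides with $T_v$. This is the standard fact that only the facet inequalities active at $v$ matter locally; all the others are strict at $v$.

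Since every active index lies in $\{1,\dots,t\}$, each active constraint satisfies $\langle h_i, u\rangle = 0$. Hence $v + su \in T_v$ for all $s\in\R$ of both signs. For small $|s|$ the points $v\pm su$ therefore lie in $P$. Then $v$ is the midpoint of the segment joining $v+su$ and $v-su$, which lies in $P$. This contradicts $v$ being a vertex. Equivalently, the normal cone at a vertex is full-dimensional and is spanned by the normals of the facets through that vertex; these normals cannot all lie in a proper subspace.

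The argument is elementary and needs only two facts: that $h_i$ is normal to $F_i$, and that the tangent cone at $v$ is determined by the facets through $v$. The one step to state carefully is this local description. If the facet representation $P=\{x:\langle h_i,x\rangle\le b_i \text{ for all } i\}$ is available, the inequalities that are inactive at $v$ remain strict in a small neighborhood of $v$, and this gives the claim directly. I expect that justification to be the only point needing care. The degenerate case $t=0$ is trivial.
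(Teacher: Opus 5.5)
Your proposal is correct and follows essentially the same route as the paper: take a nonzero vector orthogonal to the span of $h_1,\dots,h_t$, and observe that every vertex must lie in a facet whose normal is not orthogonal to it, so every vertex is counted in $\sum_{i=t+1}^k m_i$. The paper only asserts this key fact, and your tangent-cone argument supplies the justification it omits: if every facet through $v$ had normal orthogonal to $u$, then $v\pm su\in P$ for small $s$, so $v$ would not be a vertex.
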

\begin{proof}
    Let $w$ be any vector orthogonal to the $(d-1)$-dimensional subspace containing $h_1,\dots,h_t$. Each vertex of $P$ is contained in a facet whose normal vector is not perpendicular to $w$ (in other words, a facet that is not parallel to $w$), so
    \[
        m
        \leq \sum_{\langle h_i,w\rangle \neq 0} m_i
        \leq \sum_{i=t+1}^k m_i.\qedhere
    \]
\end{proof}
So it will be enough to work with $m_i$ for $i \geq t+1$.

Now we want to transfer this proof scheme to obtain a lower bound for $\Vol_{d-1}\!\big(A(\Pi_i)\big)$. Here's how. Let $A^{-\top}$ denote $(A^{-1})^\top = (A^\top)^{-1}$. The vector $A^{-\top}(h_i)$ is perpendicular to $A(\Pi)$, and $|A^{-\top}h_i| = \Vol_{d-1}\!\big(A(\Pi_i)\big)$. (Verifying this is straightforward linear algebra; see \cref{sec:applying-A}.) Now we can repeat everything from above, but with $A(\Pi_i)$ in place of $\Pi_i$ and $A^{-\top}h_i$ in place of $h_i$. If $r$ is the largest index so that $A^{-\top}h_1,\dots,A^{-\top}h_r$ is contained in a hyperplane, then
\begin{equation}\label{eq:m-restricted-sum}
    m\leq \sum_{i=r+1}^k m_i
\end{equation}
and
\begin{equation}\label{eq:m_i-upper-bound}
    m_i^{d/(d-2)} (i-d)^{1/d} \lesssim \Vol_{d-1}\!\big(A(F_i)\big)
\end{equation}
for every $i \geq r+1$.

This are the inequalities we'll use in the following section.

\subsection{Stringing inequalities}

To start, apply H\"older's inequality to \eqref{eq:m-restricted-sum} with $p=d/(d-2)$:
\begin{equation}\label{eq:holder}
    m
    \leq \sum_{i=r+1}^k m_i
    \leq \Big(\sum_{i=r+1}^k m_i^{\frac{d}{d-2}} (i-d)^{1/d}\Big)^{\frac{d-2}{d}}\ \Big(\sum_{i=r+1}^k (i-d)^{-\frac{d-2}{2d}}\Big)^{2/d}.
\end{equation}
The reason for this peculiar choice is that we found an upper bound for the terms of the first sum in the previous section. So now we just need to tackle the second sum. Let $\hat m := \sum_{i=r+1}^k m_i$. Since $m_i \geq 1$ for each $i$, we have that $k \leq \hat m$. Therefore
\[
    \sum_{i=r+1}^k (i-d)^{-\frac{d-2}{2d}}
    \lesssim \int_{r+1}^k (x-d)^{-\frac{d-2}{2d}}\, dx
    < \frac{2d}{d+2} (k-d)^{\frac{d+2}{2d}}
    \lesssim k^{\frac{d+2}{2d}}
    \leq {\hat m}^{\frac{d+2}{2d}}.
\]

Substituting into \eqref{eq:holder}, we get
\[
    \hat m \leq \Big(\sum_{i=r+1}^k \Vol_{d-1}\!\big(A(F_i)\big)\Big)^{\frac{d-2}{d}} {\hat m}^{(d+2)/d^2}.
\]
Combining the powers of $\hat m$ and bounding the sum by $\Vol_{d-1}\!\big(\partial A(P)\big)$, then applying the reverse isoperimetric inequality, we get
\[
    {\hat m}^{(d^2-d-2)/d}
    \leq \Vol_{d-1}\!\big(\partial A(P)\big)^{\frac{d-2}{d}}
    \lesssim \Vol_d
    !\big(A(P)\big)^{\frac{(d-1)(d-2)}{d^2}}.
\]
Now take each side to the power of $d^2/(d+1)(d-2)$:
\[
    m
    \leq \hat m
    \lesssim \Vol_d\!\big(A(P)\big)^{\frac{d-1}{d+1}}
    = \Vol_d\!\big(P\big)^{\frac{d-1}{d+1}},
\]
since $A$ is a volume-preserving transformation.\qed

\section{A stronger result}

With a small addition to the proof, we can obtain something stronger. A \emph{tower} or \emph{flag} of a polytope $P$ is a sequence $G_0\subset G_1 \subset \cdots \subset G_d$ where each $G_k$ is a $k$-dimensional face of $P$. We let $T(P)$ denote the total number of towers in $P$.

\begin{theorem}\label{thm:volume-tower}
    For any lattice polytope $P$ in $\R^d$,
    \[
        T(P) \lesssim \Vol(P)^{\frac{d-1}{d+1}}.
    \]
\end{theorem}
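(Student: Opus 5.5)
We run the same induction on $d$ as in the proof of \cref{thm:volume-vertex}, with the number of towers $T$ in place of the vertex count $m$. The base case $d=1$ is immediate: a segment has exactly two towers.

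For the inductive step, every tower of $P$ ends with $G_{d-1}\subset G_d=P$, where $G_{d-1}$ is a facet $F_i$. The truncated chain $G_0\subset\cdots\subset G_{d-1}$ is a tower of $F_i$, viewed as a lattice polytope in the $(d-1)$-dimensional affine sublattice of its hyperplane. Hence
\[
    T(P)=\sum_{i=1}^k T(F_i).
\]
This is an exact replacement for the inequality $m\le\sum_i m_i$. The inductive hypothesis gives the analogue of \eqref{eq:induction-ineq}:
\[
    T(F_i)\le c_{d-1}\Big(\frac{\Vol_{d-1}(F_i)}{\Vol_{d-1}(\Pi_i)}\Big)^{\frac{d-2}{d}}.
\]
This bound is invariant when we apply the volume-preserving map $A$ from the reverse isoperimetric inequality. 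The lower bound $|A^{-\top}h_i|\gtrsim (i-d)^{1/d}$ for $i\ge r+1$ depends only on the facets, not on whether we count vertices or towers. So we obtain
\[
    T(F_i)^{d/(d-2)}(i-d)^{1/d}\lesssim \Vol_{d-1}\big(A(F_i)\big)\qquad (i\ge r+1),
\]
the analogue of \eqref{eq:m_i-upper-bound}.

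The one place the vertex proof must be changed is \cref{thm:restricted-sum}. There the sum was restricted to $i\ge r+1$ by covering vertices with facets not parallel to $w$. For towers we cannot simply drop the facets $F_1,\dots,F_r$: their towers genuinely contribute to $T(P)$. This is the ``small addition'' and the main obstacle. I would handle it with a charging argument.

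Let $w$ be orthogonal to the hyperplane containing $A^{-\top}h_1,\dots,A^{-\top}h_r$. Every face of $P$, other than $P$ itself, that is parallel to $w$ has a vertex, and that vertex lies in some facet $F_j$ with $j\ge r+1$. Concretely, I would charge each tower $G_0\subset\cdots\subset G_{d-1}=F_i$ with $i\le r$ to the vertex $G_0$. That vertex lies in at least one facet $F_j$ with $j\ge r+1$; in fact it is a vertex of the face obtained by moving to the extreme point in direction $\pm w$. I then need to bound the number of towers through a given vertex by a dimensional constant times the number of towers of such an $F_j$ through it.

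That bound is not true in general, because vertex degrees are unbounded. So my first attempt would be a cruder route:
\[
    T(P)\le \sum_{\text{vertices }v}(\text{towers at }v)\quad\text{and}\quad \text{towers at } v \le \sum_{j\ge r+1,\,v\in F_j} T_v(F_j)\cdot(\text{something}).
\]
If that fails, I would instead choose $w$ generically and run the induction with the count of towers of faces not parallel to $w$. Either way, the target is the inequality
\[
    T(P)\lesssim \sum_{i=r+1}^k T(F_i).
\]
Once this inequality holds, the rest of the proof is identical to the vertex case. We apply H\"older's inequality with $p=d/(d-2)$ to this sum. Since $T(F_i)\ge 1$, we have $k\le \hat T:=\sum_{i\ge r+1}T(F_i)$, which bounds the second H\"older factor by $\hat T^{(d+2)/(2d)}$. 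Finally, the reverse isoperimetric inequality gives
\[
    \hat T^{(d^2-d-2)/d}\lesssim \Vol(P)^{(d-1)(d-2)/d^2},
\]
and raising both sides to the power $d^2/\big((d+1)(d-2)\big)$ yields $T(P)\lesssim\Vol(P)^{\frac{d-1}{d+1}}$.
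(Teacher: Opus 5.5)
You reproduce the paper's framework exactly: induction on $d$, the recurrence $T(P)=\sum_i T(F_i)$, the inductive bound on $T(F_i)$ transported through $A$, and the H\"older/reverse-isoperimetric finish. You also correctly find the one step that must change, the restricted-sum lemma. But the replacement inequality $T(P)\lesssim\sum_{i\ge r+1}T(F_i)$ is the only genuinely new content of this theorem, and your proposal does not prove it. None of your three routes delivers it.
Your first route you yourself note is false: at the apex of a pyramid over an $n$-gon there are $2n$ towers, but only $2$ of them lie in any single facet. Your second leaves the factor ``(something)'' unspecified, and showing it is a dimensional constant is exactly what is required. Your third cannot work, because $w$ is not yours to choose. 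It must be orthogonal to $A^{-\top}h_1,\dots,A^{-\top}h_r$: the facets $F_1,\dots,F_r$, for which no lower bound on $|A^{-\top}h_i|$ is available, are discarded precisely because they are parallel to $w$. For a generic $w$ no facet is parallel to $w$, so nothing is discarded.

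The missing idea is a simple combinatorial fact. Call a face bad if it is parallel to $w$.
Badness is inherited by larger faces, so a tower contains no bad face iff its facet $G_{d-1}$ is good. Hence $\sum_{\langle A^{-\top}h_i,w\rangle\neq 0}T(F_i)$ counts exactly these good towers, and it is a sum over a subset of $\{r+1,\dots,k\}$.
Moreover, if $G$ is a good face, then at most one face of dimension $\dim G+1$ containing $G$ is bad. Any such face has affine hull $\operatorname{aff}(G)+\R w$, and a face is determined by its affine hull.
The paper applies this at each vertex $v$. A good $G_k$ lies in at least $d-k$ faces of dimension $k+1$, at most one of them bad. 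Multiplying $\frac{d-1}{d}\cdot\frac{d-2}{d-1}\cdots\frac12$ gives that at least $\frac1d$ of the towers at $v$ are good, so $\sum_{i\ge r+1}T(F_i)\ge\frac1d T(P)$.

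The branching is not uniform, so that ratio-chaining needs justification. A cleaner way to make the count rigorous is an injection. For a bad tower, let $k\ge1$ be the least index with $G_k$ bad. For $i=k,\dots,d-1$, replace $G_i$ by the other $i$-face lying between $G'_{i-1}$ and $G_{i+1}$, where $G'_{k-1}=G_{k-1}$ and $G_d=P$. Each $G'_i$ is good by the fact above. Conversely, $G_i$ is recovered as the unique bad $i$-face containing $G'_{i-1}$, so each good tower arises from at most $d-1$ bad ones.
With this inequality, the rest of your argument goes through with an extra factor of $d$.
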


The towers satisfy the recurrence $T(P) = \sum_{i=1}^k T(F_i)$, which means that you can nearly prove \cref{thm:volume-tower} by copying the proof of \cref{thm:volume-vertex}, replacing $m$ by $T(P)$ and $m_i$ by $T(F_i)$. The only part that falters is \cref{thm:restricted-sum}. It's simply never true that $T(P) \leq \sum_{i=r+1}^k T(F_i)$. However, throw in a constant and everything is fine:

\begin{lemma}
    If $t$ is the largest index such that $h_1,\dots,h_t$ is contained in a proper subspace of $\R^d$, then
    $
        \sum_{i=t+1}^k T(F_i) \geq \frac{1}{d} T(P).
    $
\end{lemma}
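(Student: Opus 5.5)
The plan is to prove a more general statement by induction on the dimension and then specialize. As in the proof of \cref{thm:restricted-sum}, let $w$ be a nonzero vector orthogonal to the hyperplane containing $h_1,\dots,h_t$. Call a facet $F_i$ \emph{bad} if $\langle h_i,w\rangle = 0$, i.e.\ $w$ lies in the direction space $\operatorname{dir}(F_i)$ of its affine hull, and \emph{good} otherwise. Every bad facet has index $i\le t$, or $w$ is orthogonal to all of $h_1,\dots,h_k$ and no facet is good. In the second case the claim below forces $T(P)=0$, which is impossible, so this case cannot occur. Hence $\sum_{i=t+1}^k T(F_i)\ge \sum_{F_i \text{ good}} T(F_i)$, and it suffices to prove the following.

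\emph{Claim:} if $Q$ is a $d$-dimensional polytope and $w\ne 0$ lies in $\operatorname{dir}(Q)$, then the towers of $Q$ whose facet is good (not parallel to $w$) number at least $\frac1d T(Q)$.

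For $d=1$ the facets are vertices, which have trivial direction space, so every tower is good and the claim holds with constant $1$. For the inductive step, write $T(Q)=T_{\mathrm{good}}+T_{\mathrm{bad}}$, where $T_{\mathrm{bad}}=\sum_{F \text{ bad}} T(F)$. I will build an injection from a large subset of the bad towers into the good towers. Fix a bad facet $F$. Then $w\in\operatorname{dir}(F)$, so the induction hypothesis applies to $F$ in dimension $d-1$: at least $\frac{1}{d-1}T(F)$ towers $G_0\subset\cdots\subset G_{d-2}$ of $F$ have top ridge $R=G_{d-2}$ not parallel to $w$.

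For such a tower, $R$ lies in exactly two facets of $Q$: $F$ and another facet $F'$. If $F'$ were also bad, then $w\in\operatorname{dir}(F)\cap\operatorname{dir}(F')=\operatorname{dir}(R)$, a contradiction. So $F'$ is good, and I map the tower $(G_0,\dots,R,F)$ to $(G_0,\dots,R,F')$. This map is injective, because the image determines the ridge $R$ and hence $F$ as the other facet through it. Therefore $T_{\mathrm{good}}\ge \frac{1}{d-1}T_{\mathrm{bad}}$. Consequently $T(Q)\le T_{\mathrm{good}}+(d-1)T_{\mathrm{good}}=d\,T_{\mathrm{good}}$, which completes the induction.

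The main obstacle is finding the right statement to induct on: the lemma as written refers to the ordering of the $h_i$, and this is not inherited by faces. The fix is to reformulate purely in terms of a vector $w$ in the direction space, which does pass to bad facets. After that, the only real geometric input is the ridge-flipping argument: each ridge lies in exactly two facets, and the direction space of a ridge is the intersection of the direction spaces of those two facets. I expect that step to need only a brief justification.
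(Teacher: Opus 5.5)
Your proof is correct, but it runs in the opposite direction from the paper's.

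\textbf{Comparison with the paper.} The paper fixes a vertex $v$ and climbs the flag. A $k$-face not parallel to $w$ lies in at least $d-k$ faces of dimension $k+1$, at most one of them parallel to $w$. Multiplying $\frac{d-1}{d}\cdots\frac12$ then gives the local bound $|\mathcal T_v^w(P)|\ge\frac1d|\mathcal T_v(P)|$.

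You instead descend. You induct on dimension through $T(Q)=\sum_F T(F)$ and apply the hypothesis inside each bad facet; this is why you reformulated in terms of $w\in\operatorname{dir}(Q)$. You then use the diamond property together with $\operatorname{dir}(F)\cap\operatorname{dir}(F')=\operatorname{dir}(R)$ to inject a $\frac1{d-1}$ share of the bad towers into the good ones.

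The paper's route is shorter and gives a stronger per-vertex statement. As written, however, it only controls the proportion of good \emph{choices} at each level. Turning that into a proportion of towers tacitly needs that the (at most one) bad $(k+1)$-face containing $G_k$ carries at most a $\frac1{d-k}$ share of the completions. Applied to the face figure of $G_k$, this says a vertex of an $s$-polytope lies in at most a $\frac1{s+1}$ fraction of its flags. That is true, and sharp for apexes of pyramids, but it needs its own proof, which the paper does not give.

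Your injection avoids this issue completely and is rigorous as it stands. It yields only the global inequality, which is all the lemma requires.

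\textbf{One slip in your reduction.} The sentence ``every bad facet has index $i\le t$'' states the wrong direction, and it is false in general. For the unit cube with normals ordered $e_1,e_2,e_3,-e_1,-e_2,-e_3$, one has $t=2$ and $w=\pm e_3$, yet $F_4$ is bad.

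What you need follows at once from $w\perp h_1,\dots,h_t$: every facet with $i\le t$ is bad, so all good facets have index at least $t+1$. That is exactly what gives $\sum_{i=t+1}^k T(F_i)\ge\sum_{F_i\text{ good}}T(F_i)$.

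The case $t=k$ is more simply excluded by noting that the facet normals of a full-dimensional polytope span $\R^d$.
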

\begin{proof}
    As before, let $w$ be any vector orthogonal to the $(d-1)$-dimensional subspace containing $h_1,\dots,h_t$. We will show that
    \[
        \sum_{\langle h_i,w\rangle \neq 0} T(F_i) \geq \frac{1}{d} T(P).
    \]
    The left-hand sum is equal to the number of towers of $P$ that do not include a face that is parallel to $w$. (A face is \emph{parallel} to $w$ if its affine span contains a translate of $w$.) Given a vertex $v$, let $\mathcal T_v(P)$ denote the set of towers of $P$ whose 0-dimensional face is $v$, and let $\mathcal T_v^w(P)$ denote the subset of towers in $T_v^w(P)$ which do not include a face parallel to $w$. We will prove that $|\mathcal T_v^w(P)|\geq \frac{1}{d} |\mathcal T_v(P)|$. Since $T(P) = \sum_v |\mathcal T_v(P)|$, that suffices to prove the lemma.

    If $G_k$ is a $k$-dimensional face not parallel to $w$, then it is contained in at least $d-k$ faces of dimension $k+1$ and at most one of them is parallel to $w$. Using this fact starting from a single vertex, we have
    \[
        |\mathcal T_v^w(P)|
        \geq \frac{d-1}{d} \frac{d-2}{d-1} \cdots \frac{2}{3} \frac{1}{2} |\mathcal T_v(P)|
        = \frac{1}{d} |\mathcal T_v(P)|.\qedhere
    \]
\end{proof}

\cref{thm:volume-tower} also implies a version of \cref{thm:volume-vertex} for faces of all dimension:
\begin{theorem}
    There exists a constant $c_d$ so that for every lattice polytope $P$ in $\R^d$, the number of $k$-dimensional faces of $P$ is at most $c_d\Vol(P)^{\frac{d-1}{d+1}}$.
\end{theorem}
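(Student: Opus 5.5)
The plan is to deduce the statement directly from \cref{thm:volume-tower} by a counting argument: every $k$-dimensional face of $P$ lies in at least one tower, so the number of $k$-faces is at most $T(P)$. Concretely, I would define a map from towers to $k$-faces by sending $G_0\subset\cdots\subset G_d$ to its $k$-th member $G_k$, and show that this map is surjective.

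The key step is to check that every $k$-face $G$ appears as the $k$-th member of some tower. This follows from standard facts about face lattices of polytopes. Going downward, $G$ is itself a $k$-polytope, so it has a facet $G_{k-1}$. That facet is a face of $P$, because a face of a face is a face. Repeating this gives a chain $G_0\subset\cdots\subset G_{k-1}\subset G$, with $G_0$ a vertex. Going upward, the face lattice of $P$ is graded, so any face $G$ with $k<d$ is contained in a face of dimension $k+1$. Iterating this up to $G_d=P$ completes the tower; here I am assuming $P$ is full-dimensional, as the tower definition in the paper does.

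Surjectivity then gives $f_k(P)\le T(P)$. Applying \cref{thm:volume-tower}, we get $f_k(P)\le T(P)\le c_d\Vol(P)^{\frac{d-1}{d+1}}$, with the same constant as in \cref{thm:volume-tower}, uniformly in $k$.

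I do not expect a real obstacle. The only points needing care are that the face lattice is graded, so that a $(k+1)$-face containing $G$ always exists, and the degenerate case where $P$ is not full-dimensional. In that degenerate case $\Vol(P)=0$, so the inequality is meaningful only for full-dimensional $P$, and the statement should be read with that implicit assumption, as in \cref{thm:volume-vertex}.
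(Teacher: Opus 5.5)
Your proposal is correct and is exactly the deduction the paper intends when it says \cref{thm:volume-tower} implies this statement. Every $k$-face lies in at least one tower (by the downward facet chain and gradedness of the face lattice), so $f_k(P)\le T(P)\lesssim \Vol(P)^{\frac{d-1}{d+1}}$, and your full-dimensionality caveat is the same implicit assumption already needed in \cref{thm:volume-vertex} and \cref{thm:volume-tower}.
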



\vspace{4\baselineskip}
\appendix
\section{The rest of the proofs}\label{sec:other-proofs}

\subsection{Reverse isoperimetric inequality}\label{sec:reverse-isoperimetric}
John's theorem quickly proves this theorem, though not with an optimal constant.

Choose $A$ so that the largest-volume ellipsoid contained inside $A(C)$ is a ball $B$. Since both sides of the inequality are linear under scaling by a constant factor, we may assume that $\Vol\!\big(A(C)\big) = 1$. It suffices therefore to show that $\Vol_{d-1}\!\big(\partial A(C)\big)$ is bounded by a constant. This follows from John's theorem, which implies that $A(C) \subseteq d\cdot B \subseteq d\cdot B^d$, where $B^d$ is the unit ball. So we can take $c_d = d^{d-1} \Vol_{d-1}(\partial B^d)$.

\subsection{\texorpdfstring{\cref{thm:volume-vertex}}{The theorem} is tight}\label{thm:volume-vertex-tight}
The example is surprisingly simple. Let $X$ be the set of points
\[
    X = \big\{ (x, \|x\|^2) \in \Z^d \,:\, x \in \{-n,-n+1,\dots,n-1,n\}^{d-1} \big\}.
\]
We will take $P = \conv(X)$.

Since $x \mapsto \|x\|^2$ is a convex function, $X$ is a set of points in convex position, so $P = \conv(X)$ has exactly $|X| = (2n+1)^{d-1}$ vertices. The exact volume of $P$ might be difficult to calculate, but $P$ certainly contains the pyramid with apex at the origin and whose base has vertices $\{\pm n\}^{d-1} \times \{dn^2\}$. The volume of this cone is
\[
    \frac{1}{d} (2n)^{d-1} dn^2 = 2^{d-1} n^{d+1}.
\]
Thus
\[
    \Vol(P)^{\frac{d-1}{d+1}}
    \leq C_d\, \lvert\operatorname{vert}(P)\rvert
\]
for a suitable choice of constant $C_d$.

\subsection{Normal vector to the parallelotope}\label{thm:normal-vector}

\begin{proposition}
    Given $d-1$ vectors $v_1,\dots,v_{d-1} \in \Z^d$, a vector orthogonal to their span and whose length is equal to the $(d-1)$-dimensional volume of the parallelotope generated by $v_1,\dots,v_{d-1}$ is a member of $\Z^d$.
\end{proposition}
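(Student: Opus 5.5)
The plan is to use the generalized cross product. Let $M$ be the $d\times(d-1)$ integer matrix with columns $v_1,\dots,v_{d-1}$. Define $h\in\R^d$ by cofactor expansion: its $j$-th coordinate is $h_j = (-1)^{j+1}\det M^{(j)}$, where $M^{(j)}$ is $M$ with its $j$-th row deleted. Each $M^{(j)}$ is a $(d-1)\times(d-1)$ integer matrix, so every $h_j$ is an integer and $h\in\Z^d$. It remains to show that $h$ is orthogonal to the span of the $v_i$ and that $|h|$ equals the $(d-1)$-volume of the parallelotope. Since the proposition only asks for \emph{a} vector with these properties, exhibiting this one suffices.

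For orthogonality, note that for any $x\in\R^d$, expanding along the first column gives
\[
    \det[x \mid v_1 \mid \cdots \mid v_{d-1}] = \langle x, h\rangle .
\]
Taking $x=v_i$ produces a matrix with a repeated column, so $\langle v_i,h\rangle=0$ for every $i$.

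For the length, I would take $x=h$ in the same identity, which gives $|h|^2=\det[h\mid M]$. If $h=0$, the $v_i$ are dependent: all maximal minors vanish, so $M$ has rank less than $d-1$, and the parallelotope has volume $0$, so the claim holds. Otherwise, $h$ is orthogonal to the column space of $M$, so
\[
    [h\mid M]^\top[h\mid M] = \begin{pmatrix}|h|^2 & 0\\ 0 & M^\top M\end{pmatrix},
\]
and therefore $\det[h\mid M]^2 = |h|^2\det(M^\top M)$. Combined with $\det[h\mid M]=|h|^2$, this gives $|h|^4=|h|^2\det(M^\top M)$. Dividing by $|h|^2$ yields $|h|^2=\det(M^\top M)$, the Gram determinant, which is the square of the $(d-1)$-volume of the parallelotope spanned by the $v_i$. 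Alternatively, the same conclusion follows from Cauchy–Binet, which writes $\det(M^\top M)=\sum_j(\det M^{(j)})^2=|h|^2$ directly.

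The main obstacle is the identification of $\sqrt{\det(M^\top M)}$ with the $(d-1)$-volume. I would justify it by choosing an orthonormal basis of $\R^d$ whose first $d-1$ vectors span the hyperplane containing the $v_i$; such a basis exists even when the $v_i$ are dependent, by taking any hyperplane containing their span. Writing $M=QN$ with $Q$ orthogonal and $N$ having last row zero, the top $(d-1)\times(d-1)$ block $N'$ of $N$ expresses the $v_i$ in coordinates on that hyperplane. Then $\det(M^\top M)=\det(N^\top N)=(\det N')^2$, and $|\det N'|$ is exactly the $(d-1)$-volume of the parallelotope. Everything else is routine determinant manipulation.
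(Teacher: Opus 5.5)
Your proof is correct and is essentially the paper's first (``matrix method'') argument. It uses the same cofactor vector, orthogonality via a repeated column, and the identity $\det[h\mid M]=|h|^2$. The only change is at the length step: the paper uses base times height, $|\det[h\mid M]|=|h|\cdot B$, while you use the Gram determinant (or Cauchy--Binet). You also get the signs $(-1)^{j+1}$ right and treat the case $h=0$, both of which the paper's write-up glosses over.

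One small remark: the statement is more naturally read as saying that \emph{every} vector with these properties is integral, not just one. That follows at once, since such a vector must be $\pm h$ when the $v_i$ are independent and $0$ otherwise.
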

\noindent
We prove this in two ways.

\subsubsection*{The matrix method}
Arrange the vectors $v_1,\dots,v_{d-1}$ as columns in a matrix $M$, and define $h$ by its coordinates: $h_i$ is the determinant of $M$ after deleting the $i$th row. This is a general way of producing a vector orthogonal to $d-1$ others. To verify that $h$ is orthogonal to $v_i$, we can use the cofactor formula for the determinant to see that $\langle h, v_i\rangle$ is equal to the determinant of the matrix with columns $v_1,\dots,v_{d-1}, v_i$, which is 0, since 1 column is repeated.

If $v_1,\dots,v_{d-1} \in \Z^d$, then by definition $h \in \Z^d$. We need to calculate $|h|$. Let $B$ be the $(d-1)$-dimensional volume of the parallelotope generated by $v_1,\dots,v_{d-1}$. On the one hand, $|h|^2 = h_1^2 + \cdots + h_d^2$. Also, since $h$ is orthogonal to $v_1,\dots,v_{d-1}$, we know that $\det(h,v_1,\dots,v_{d-1}) = |h|\cdot B$. On the other hand, direct calculation using the cofactor formula shows that $\det(h,v_1,\dots,v_{d-1}) = h_1^2 + \cdots + h_d^2$. We conclude that $|h|^2 = |h|\cdot B$, so $|h| = B$, as desired.

\subsubsection*{The geometric method}

If $v \in \Z^d$ and the set of coordinates of $v$ has greatest common divisor 1 (in which case $v$ is called \emph{primitive}), then the projection of $\Z^d$ onto $\operatorname{span}(v)$ is the set of points $\frac{k}{|v|} v$ with $k \in \Z$. (This is because the vector $w$ projects onto the point $\frac{\langle w,v\rangle}{|v|}v$, and if $v$ is primitive, then there is a solution to $\langle w,v\rangle = k$ for every $k \in \Z$.) If $v \notin \Z^d$, then the projection of $\Z^d$ onto $\operatorname{span}(v)$ is dense.

Let $h'$ be a unit vector orthogonal to $v_1,\dots,v_{d-1}$. The projection of a lattice onto the orthogonal complement of any sublattice is also a lattice, so the projection of $\Z^d$ onto $\operatorname{span}(h')$ is a sublattice. Let $B$ be the $(d-1)$-dimensional volume of the parallelotope generated by $v_1,\dots,v_{d-1}$, let $B_f$ be the volume of a fundamental parallelotope in $\operatorname{span}(v_1,\dots,v_{d-1})$, and let $w$ be a vector that projects to a minimal nonzero vector $\rho(w)$ in $\operatorname{span}(h')$. Then $v_1,\dots,v_{d-1},w$ generate a fundamental parallelotope in $\Z^d$. The volume of this parallelotope is therefore 1, which means that the orthogonal component of $w$ is $1/B_f$. In other words, $|\rho(w)|=1/B_f$.

Therefore the projection of $\Z^d$ onto $\operatorname{span}(h')$ is $\frac{k}{B_f}h'$. Using the reasoning from the first paragraph, we conclude that $B_f \cdot h'$ is a lattice vector. Since $B_f$ divides $B$, we also have that $h := B\cdot h'$ is a lattice vector.

\subsection{Volume vs. total lattice points}\label{sec:volume-lattice_point}

Here is the fundamental fact that connects volume to lattice points:

\begin{lemma}\label{thm:volume-of-simplex}
    The volume of a lattice simplex in $\R^d$ is at least $1/d!$.
\end{lemma}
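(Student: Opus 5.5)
The plan is to express the volume of a lattice simplex as a determinant and use the fact that a nonzero integer has absolute value at least $1$. Let $S=\conv(v_0,v_1,\dots,v_d)$ be a lattice simplex in $\R^d$. Here \emph{simplex} means a full-dimensional one, so that the statement is not vacuous. Translating by the lattice vector $-v_0$ preserves both volume and the property of having lattice vertices, so I will assume $v_0=0$ and the remaining vertices $v_1,\dots,v_d$ lie in $\Z^d$.

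The first step is the standard formula
\[
    \Vol(S) = \frac{1}{d!}\,\bigl|\det(v_1,\dots,v_d)\bigr|.
\]
I would justify it in two moves.
\begin{itemize}
    \item The linear map $M$ with columns $v_1,\dots,v_d$ sends the standard simplex $\Delta=\conv(0,e_1,\dots,e_d)$ onto $S$, so by the change of variables formula $\Vol(S)=|\det M|\,\Vol(\Delta)$.
    \item $\Vol(\Delta)=1/d!$. One way to see this is to note that the unit cube $[0,1]^d$ is the union of the $d!$ simplices $\{x: x_{\sigma(1)}\ge \cdots \ge x_{\sigma(d)}\}$, one for each permutation $\sigma$. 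These overlap only in measure-zero sets and are all congruent to $\Delta$ under unimodular maps. Alternatively, compute $\int_\Delta 1$ iteratively to get $1/d!$.
\end{itemize}

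The second step is the integrality argument. Since every entry of $M$ is an integer, $\det M$ is an integer, because it is a polynomial with integer coefficients in the entries. Since $S$ is full-dimensional, $v_1,\dots,v_d$ are linearly independent, so $\det M\neq 0$. Hence $|\det M|\ge 1$, and therefore $\Vol(S)\ge 1/d!$.

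No step here is a real obstacle. The only point needing care is the computation $\Vol(\Delta)=1/d!$, and I would handle it with the cube decomposition above. I would also note in passing that the bound is attained by $\Delta$ itself. In the next step (\cref{thm:volume-lattice_point}), this lemma will be combined with a triangulation of $\conv(X)$ that uses every point of $X$ as a vertex. Such a triangulation has at least $|X|-d$ simplices, which gives the bound $(|X|-d)/d!$.
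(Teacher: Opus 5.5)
Your proof is correct and follows the same route as the paper: write $\Vol(S)=\lvert\det(v_1,\dots,v_d)\rvert/d!$, then use that a nonzero integer determinant has absolute value at least $1$. The only difference is how you justify the factor $1/d!$: you map the standard simplex linearly onto $S$ and split the unit cube into $d!$ unimodular copies of it, whereas the paper uses the cone formula $hB/d$ and induction, and this difference is immaterial.
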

\begin{proof}
    By elementary calculus, the volume of a cone in $\R^d$ with base $(d-1)$-volume $B$ and height $h$ is $hB/d$. Suppose that $S$ is a simplex, and let $v_1,\dots,v_d$ be the vectors corresponding to the edges emanating from one of its vertices. Induction shows that the volume of $S$ is exactly equal to $1/d!$ times the volume of the parallelotope determined by $v_1,\dots,v_d$. That volume is given by $\lvert\det(v_1,\dots,v_d)\rvert$. If $S$ is a lattice simplex, then $v_1,\dots,v_d$ are integer vectors. Since $\det(v_1,\dots,v_d) \neq 0$, we have
    \[
        \Vol(S)
        \geq \frac{\lvert\det(v_1,\dots,v_d)\rvert}{d!}
        \geq \frac{1}{d!}.
        \qedhere
    \]
\end{proof}

One way to prove \cref{thm:volume-lattice_point} is to actually prove the stronger statement that any lattice polytope can be decomposed into at least $|X|-d$ lattice simplices. \cref{thm:volume-of-simplex} then implies \cref{thm:volume-lattice_point}. The proof of this stronger statement goes by induction on $|X|$. Intuitively, it's pretty clear: remove one vertex $v$ from $X$ and find a lattice simplex in $\conv(X)\setminus \conv(X\setminus v)$; then repeat. Writing it out in full is somewhat tedious, so I'll leave it for you to think about.

There are many examples that show that \cref{thm:volume-lattice_point} is tight (up to the constant). One simple example is $X = \{1,2,\dots,n\}^d$, which has $n^d$ lattice points and a convex hull with volume $(n+1)^d$.

\subsection{Miscellaneous linear algebra}\label{sec:applying-A}

\begin{lemma}
    If $u,v \in \R^d$ are orthogonal and $A$ is any linear transformation, then $A^{-\top}v$ is orthogonal to $Au$.
\end{lemma}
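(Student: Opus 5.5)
The plan is a one-line computation using the transpose identity $\langle Ax, y\rangle = \langle x, A^\top y\rangle$, together with the invertibility of $A$ implicit in writing $A^{-\top}$.

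Write $A^{-\top} = (A^{-1})^\top$ and compute the inner product directly:
\[
    \langle A^{-\top}v,\, Au\rangle
    = \langle (A^{-1})^\top v,\, Au\rangle
    = \langle v,\, A^{-1}Au\rangle
    = \langle v, u\rangle
    = 0.
\]
The second equality moves $(A^{-1})^\top$ across the inner product as $A^{-1}$. The last equality is the orthogonality hypothesis.

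There is no real obstacle. The only points to flag are these.
\begin{enumerate}
    \item The statement says ``any linear transformation,'' but $A^{-\top}$ only makes sense for invertible $A$. This is automatic in the application, since $A$ is volume-preserving.
    \item Inner products are taken with respect to the standard Euclidean structure, so that the adjoint of a matrix is its transpose.
\end{enumerate}

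In the application, $u$ ranges over the edge vectors of $\Pi_i$ and $v = h_i$. The lemma then shows that $A^{-\top}h_i$ is normal to every edge of $A(\Pi_i)$, and hence to the hyperplane spanned by $A(\Pi_i)$.
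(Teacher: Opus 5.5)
Your proof is correct and is the same one-line computation the paper gives, $\langle A^{-\top}v, Au\rangle = \langle v, A^{-1}Au\rangle = \langle v,u\rangle = 0$. You are also right that the phrase ``any linear transformation'' should read ``any invertible linear transformation'' for $A^{-\top}$ to be defined; this causes no trouble in the application, since there $A$ is volume-preserving.
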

\begin{proof}
    A calculation: $\langle A^{-\top}v, Au\rangle = \langle v, A^{-1}Au\rangle = \langle v,u\rangle = 0$.
\end{proof}

\begin{lemma}
    Given $v_1,\dots,v_{d-1} \in \R^d$, let $Q$ be the parallelotope they generate, and let $h$ be a vector orthogonal to $v_1,\dots,v_{d-1}$ with length $|h|=\Vol_{d-1}(Q)$. If $A$ is a volume-preserving linear transformation, then $A^{-\top}h$ is orthogonal to $A(Q)$ and has length $|A^{-\top}h| = \Vol_{d-1}\!\big( A(Q)\big)$.
\end{lemma}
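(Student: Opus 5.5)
The plan is to use the determinant (cofactor) description of $h$ from \cref{thm:normal-vector}. The point is that the map $(v_1,\dots,v_{d-1})\mapsto h$ is the generalized cross product. It is characterized by the identity
\[
    \langle h, x\rangle = \det(x, v_1,\dots,v_{d-1}) \qquad\text{for all } x\in\R^d,
\]
after fixing the sign of $h$ to match this orientation convention. So I will first check that, up to sign, this $h$ is the vector in the statement. It is orthogonal to every $v_j$, since the determinant has a repeated column. Its length is $\Vol_{d-1}(Q)$ by the computation in the matrix method: plug in $x=h$ to get $|h|^2=\det(h,v_1,\dots,v_{d-1})=|h|\Vol_{d-1}(Q)$. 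If the $v_j$ are linearly dependent, then $h=0$ and $Q$ is degenerate, so there is nothing to prove. The case $h\neq 0$ is the one that matters.

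Next I apply the same characterization to the vectors $Av_1,\dots,Av_{d-1}$, which generate $A(Q)$. Let $h^A$ denote their cross product. By the first step, $h^A$ is orthogonal to $A(Q)$ and has length $\Vol_{d-1}(A(Q))$. It therefore suffices to show $h^A = A^{-\top}h$. For any $x$,
\[
    \langle h^A, x\rangle = \det(x, Av_1,\dots,Av_{d-1}) = \det(A)\det(A^{-1}x, v_1,\dots,v_{d-1}) = \det(A)\,\langle h, A^{-1}x\rangle = \det(A)\,\langle A^{-\top}h, x\rangle .
\]
This gives $h^A=\det(A)\,A^{-\top}h$. Since $A$ is volume-preserving, $|\det A|=1$, so $h^A=\pm A^{-\top}h$. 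Hence $|A^{-\top}h|=\Vol_{d-1}(A(Q))$.

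Orthogonality also follows directly from the preceding lemma, because $h\perp v_j$ implies $A^{-\top}h\perp Av_j$. Only the length claim needs the computation above.

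The main obstacle is the factorization $\det(x,Av_1,\dots,Av_{d-1})=\det(A)\det(A^{-1}x,v_1,\dots)$. It holds because the matrix $(x, Av_1,\dots,Av_{d-1})$ equals $A\cdot(A^{-1}x, v_1,\dots,v_{d-1})$, and the determinant is multiplicative. The remaining care is bookkeeping of the sign and orientation, which does not affect lengths. As a fallback, one can avoid determinants: $\Vol_d$ of the parallelotope spanned by $v_1,\dots,v_{d-1}$ and a unit normal $n$ equals $\Vol_{d-1}(Q)$. Applying $A$ preserves this volume, and comparing heights of $A n$ along the normal $A^{-\top}h/|A^{-\top}h|$ gives the same identity.
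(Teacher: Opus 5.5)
Your proof is correct. Your main argument takes a different route from the paper, while your fallback is essentially the paper's proof.

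\textbf{The paper's route.} It takes the parallelotope $R$ spanned by $v_1,\dots,v_{d-1},h$ and uses $\Vol(A(R))=\Vol(R)=\Vol_{d-1}(Q)^2$. It then recomputes $\Vol(A(R))$ as base times height. The height is the component of $Ah$ along the unit normal $A^{-\top}h/|A^{-\top}h|$, which comes from the preceding orthogonality lemma, and equals $\langle Ah,A^{-\top}h\rangle/|A^{-\top}h|=|h|^2/|A^{-\top}h|$. If you carry this out, take the base to be $\Vol_{d-1}(A(Q))$. The paper's display writes $\Vol_{d-1}(Q)$ there and ends with $|A^{-\top}h|=\Vol_{d-1}(Q)$; both are slips for $\Vol_{d-1}(A(Q))$.

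\textbf{Your route.} You identify $h$, up to sign, with the generalized cross product $c$ defined by $\langle c,x\rangle=\det(x,v_1,\dots,v_{d-1})$. You then derive the transformation law $c(Av_1,\dots,Av_{d-1})=\det(A)\,A^{-\top}c(v_1,\dots,v_{d-1})$ from multiplicativity of the determinant. This buys you several things:
\begin{itemize}
\item an identity of vectors, sign included, rather than only of lengths;
\item orthogonality without appealing to the preceding lemma;
\item a definition of $c$ that automatically carries the alternating cofactor signs, which the paper's ``delete the $i$th row'' description in the matrix method omits.
\end{itemize}
The general formula $|A^{-\top}h|=\Vol_{d-1}(A(Q))/|\det A|$ for invertible $A$ falls out of either approach; the paper's just uses $|\det A|\Vol(R)$ in place of $\Vol(R)$.

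The cost is the extra bookkeeping you correctly supply. You need uniqueness of $h$ up to sign, which holds because the orthogonal complement of $\operatorname{span}(v_1,\dots,v_{d-1})$ is a line in the nondegenerate case. You also handle the degenerate case separately. And you still use the base-times-height fact once, to pass from $|c|^2=\det(c,v_1,\dots,v_{d-1})$ to $|c|=\Vol_{d-1}(Q)$.
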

\begin{proof}
    Let $R$ be the parallelotope generated by $v_1,\dots,v_{d-1},h$. Since $A$ is volume-preserving and $h$ is orthogonal to $Q$,
    \[
        \Vol\!\big(A(R)\big) = \Vol(R) = \Vol_{d-1}(Q)^2.
    \]
    On the other hand, the volume of $A(R)$ is the volume of the base times the length of the orthogonal component of $Ah$:
    \[
        \Vol\!\big(A(R)\big)
        = \frac{\langle Ah, A^{-\top}h\rangle}{|A^{-\top}h|} \Vol_{d-1}(Q)
        = \frac{|h|^2}{|A^{-\top}h|} \Vol_{d-1}(Q)
        = \frac{\Vol_{d-1}(Q)^3}{|A^{-\top}h|}.
    \]
    Chain the inequalities and cancel terms to get $|A^{-\top}h| = \Vol_{d-1}(Q)$.
\end{proof}

\bibliographystyle{amsplain-nodash}
\bibliography{bibliography}

\vspace{1.5\baselineskip}

\noindent
{\small \textsc{Travis Dillon}}\\
{\small \textsc{Department of Mathematics, Massachusetts Institute of Technology, Cambridge, MA, USA}}\\
\textit{email:} \texttt{travis.dillon@mit.edu}

\end{document}

%% file: header.tex
\usepackage[T1]{fontenc}

\usepackage[letterpaper, portrait, left=1in, right=1in, top=0.9in, bottom=1in, footnotesep=1.5\baselineskip]{geometry}
\usepackage[dvipsnames]{xcolor}
\usepackage{contour}
    \contourlength{0.5pt}
\usepackage{tocloft}

\usepackage{environ} 
\usepackage[framemethod=tikz]{mdframed}
\usepackage{wrapfig}
\usepackage{mathtools}
    \usetikzlibrary{calc, math, cd, arrows.meta, braids, decorations.pathreplacing, decorations.markings, decorations.pathmorphing, bending, shapes}
    \usepackage{tikz-3dplot}
    \tikzset{vertex/.style={draw, shape=circle, inner sep=1.5pt, minimum size=4pt}}
    \tikzset{<->/.tip={Latex}}
    \tikzset{shorten > = 2pt, shorten <=2pt}
    \tikzset{smallnode/.style={every node/.style={draw, fill=black, shape=circle, inner sep=0pt, minimum size=4pt}, scale=0.7}}
    \tikzset{drawnode/.style={fill,shape=circle,inner sep=0pt, minimum size=3pt}}
\usepackage{pgfornament}
\usepackage{fourier-orns} 
\usepackage{stmaryrd}
\usepackage{halloweenmath}
\usepackage[shortlabels]{enumitem}
    \newlength{\circlabelwidth}
    \setlist{nosep}
    \setlist[enumerate]{label=\textup{\arabic*.}}
    \newlist{subprob}{enumerate}{2}
        \setlist[subprob,1]{label={(\roman*)}}
        \setlist[subprob,2]{label={(\arabic*)}}
    \setlist[itemize]{labelindent=10pt,labelwidth=\circlabelwidth,leftmargin=!,label=$\circ$}
    \newlist{problems}{enumerate}{3}
        \setlist[problems,1]{before=\setupstar,label=\textup{\arabic*.}, itemsep=2pt, topsep=8pt,ref=\textup{\arabic*}}
        \setlist[problems,2]{before=\setupstar,label=(\alph*),parsep=0pt}
        \setlist[problems,3]{before=\setupstar,label=(\roman*),parsep=0pt}

\usepackage{fancyhdr}
\usepackage{dopestyle}

\makeatletter
    \renewcommand\@makefntext[1]{\leftskip=0em\hskip-0em\@makefnmark\,#1}
\makeatother

\surroundwithmdframed[skipabove=0.5\baselineskip, skipbelow=0.5\baselineskip, leftmargin=3pt, rightmargin=3pt, innerleftmargin=7pt, innerrightmargin=7pt, roundcorner=10pt, linewidth=2pt, linecolor=red!40, backgroundcolor=red!5]{headsup}

\surroundwithmdframed[topline=false, bottomline=false, innertopmargin=0pt, innerbottommargin=0pt, innerleftmargin=2pt, innerrightmargin=2pt, linewidth=0.2mm]{quote}

\surroundwithmdframed[topline=false, bottomline=false, innertopmargin=2.5pt, innerbottommargin=2.5pt, innerleftmargin=-10pt, leftmargin=-10pt, innerrightmargin=-10pt, rightmargin=7.7pt, linewidth=0.4mm]{quote}

\NewEnviron{method}{
    \parbox{\textwidth}{
        \textbf{\textsc{Method}}
        \begin{mdframed}[innerleftmargin=4pt,innerrightmargin=4pt,skipabove=3pt,skipbelow=0pt]
            \BODY
        \end{mdframed}
    }
}

\theoremstyle{itcaps}
\newtheorem{theorem}{Theorem}
\newtheorem*{theorem*}{Theorem}

\newtheorem{lemma}{Lemma}
\newtheorem{proposition}[lemma]{Proposition}

\theoremstyle{solved}

\theoremstyle{caps}

\newtheorem{exampleprimitive}[theorem]{Example}

    \crefname{exercise}{Exercise}{Exercises}

\theoremstyle{remark}

\numberwithin{equation}{section}

\newcommand*{\newword}[2][]{\emph{#2}\index{%
    \ifx&#1&%
       #2%
    \else%
       #1%
    \fi}%
} 
\newcommand*{\oldword}[2][]{#2\index{%
    \ifx&#1&%
       #2%
    \else%
       #1%
    \fi}%
} 

\DeclareMathOperator{\Vol}{Vol}

\DeclareMathOperator{\conv}{conv}

\let\phi\varphi

\let\epsilon\varepsilon

\let\oldchi\chi
\renewcommand{\chi}{\raisebox{1pt}{$\oldchi$}}

\title{\inserttitle}
\date{}

\allowdisplaybreaks
